\documentclass[11pt,leqno]{amsproc}
\usepackage[margin=1in]{geometry}
\usepackage{amsmath, amsthm, amssymb}
\usepackage[colorlinks=true, pdfstartview=FitV, linkcolor=blue,citecolor=blue, urlcolor=blue]{hyperref}
\usepackage[abbrev,lite,nobysame]{amsrefs}
\usepackage{times}
\usepackage[usenames,dvipsnames]{color}
\usepackage[T1]{fontenc}
\usepackage[utf8]{inputenc} 
\usepackage{todonotes}
\usepackage{mathrsfs}

%
%
%
\usepackage{dsfont}

\def\R {\mathbb{R}}

\def\d{\diamond}

\def\R{\mathbb{R}}


\def\C{\mathbb{C}}

\newcommand{\wh}{\widehat}

\def\pr{\right )}
\def\le{\left (}

\def\d{\,d}

\def\f{\varphi}

\def\e{\varepsilon}
\def\1{\mathds{1}} 

\newtheorem{proposition}{Proposition}[section]
\newtheorem{theorem}[proposition]{Theorem}
\newtheorem{corollary}[proposition]{Corollary}
\newtheorem{lemma}[proposition]{Lemma}
\theoremstyle{definition}

\newtheorem{remark}[proposition]{Remark}

\numberwithin{equation}{section}

\title[Brezis--Kato Type Results]{Brezis--Kato Type Regularity Results for Higher Order Elliptic Operators}

\author[J. Siemianowski]{Jakub Siemianowski}
\address[J. Siemianowski]{	\newline\indent 
	Institute of Mathematics,
	\newline\indent 
	Polish Academy of Sciences,
	\newline\indent 
	ul. \'Sniadeckich 8, 00-656 Warsaw, Poland
	\newline\indent 
	and
	\newline\indent
	Faculty of Mathematics and Computer Sciences,
	\newline\indent 
	Nicolaus Copernicus University in Toru\'{n} 
	\newline\indent 
	ul. Gagarina 11, 87-100 Toruń, Poland
	\newline\indent 
}
\email{\href{mailto:jsiemianowski@impan.pl}{jsiemianowski@impan.pl}}

\email{\href{mailto:jsiem@mat.umk.pl}{jsiem@mat.umk.pl}}

\subjclass[2010]{35B65, 35J30, 35J61, 35G20}

\keywords{Brezis--Kato theorem, higher order elliptic operators, elliptic regularity, polyharmonic operators}

\begin{document}

\begin{abstract}
We prove a Brezis--Kato regularity type results for solutions of the higher order nonlinear elliptic equation
\[
	L u = g(x,u)\qquad\text{in }\Omega
\]
with an elliptic operator $L$ of $2m$ order with  variable coefficients and a Carath\'eodory function $g:\Omega\times \C\to\C$, where $\Omega\subset\R^N$ is an open set with $N > 2m$. 
\end{abstract}

\maketitle

\section{Notations and Intorduction}
The aim of this paper is to generalize the Brezis--Kato theorem \cite{Brezis_Kato} for higher order elliptic differential operators with variable coefficients.
It is intended to make the least assumptions possible concerning the coefficients of the operator so that it can serve for future references.

We consider a linear differential operator $L = (-1)^m \sum_{|\alpha|\leq 2m}a_\alpha(x)D^\alpha$ with variable complex-valued coefficients defined in an arbitrary open subset $\Omega$ of $\R^N$.
Here for every multi-index $\alpha = (\alpha_1,\ldots, \alpha_N)$ we set $D^\alpha = \frac{\partial^{|\alpha|}}{\partial x_1^{\alpha_1}\ldots\partial x_N^{\alpha_N}}$ and  $\xi^\alpha = \xi_1^{\alpha_1}\ldots\xi_N^{\alpha_N}$, for a vector $\xi = (\xi_1,\ldots, \xi_N)$ .
The characteristic form $l(x,\xi)$ of $L$ is defined by $l(x,\xi) = \sum_{|\alpha|= 2m}a_\alpha(x)\xi^\alpha$ for $x\in \Omega$ and real vectors $\xi$.
We say that $L$ is \emph{strongly elliptic} if  there exists $\lambda>0$ such that 
\begin{equation}\label{elliticity}
\mathfrak{Re} (l(x,\xi)) \geq \lambda |\xi|^{2m} 
\end{equation}
for $x\in \Omega$ and all real vectors $\xi=(\xi_1,\ldots,\xi_N)$.
We make standing assumptions:
\begin{enumerate}
\item[(i)] all coefficients $a_\alpha$ belong to the space $L^\infty_\mathrm{loc}(\Omega)$,
\item[(ii)] top-order coefficients $a_\alpha$ for $|\alpha| = 2m$ are continuous on $\Omega$.
\end{enumerate} 

We denote by $C^k(\Omega)$ (resp. $C^\infty_0(\Omega)$) the class of complex valued functions defined in $\Omega$ which has continuous derivatives up to order $k$ (resp. which are smooth and have compact support).
We consider H\"older's spaces $C^{k,h}(\Omega)$ for nonnegative integers $k$ and $0< h < 1$ which consists of all functions $u\in C^k(\Omega)$ such that $D^\alpha u$ with $|\alpha| = k$ satisfies the (uniform) H\"older condition of exponent $h$.
The symbol $W^{k,p}(\Omega)$ for $1\leq p < \infty$ denotes the Sobolev space of all complex valued $u\in L^p(\Omega)$ for which all the distributional derivatives $D^\alpha u $ are also in $L^p(\Omega)$ for $|\alpha|\leq k$.
We consider the norm $\|u\|_{W^{k,p}(\Omega)} = \le\sum_{|\alpha| \leq k}\|D^\alpha u\|^p_{L^p(\Omega)} \pr^{1/p} $ for $u\in W^{k,p}(\Omega)$.
We use the symbol $W^{k,p}_0(B)$ to denote the closure of $C^\infty_0(\Omega)$ in the $W^{k,p}(\Omega)$ norm.
The local Sobolev space $W^{k,p}_\mathrm{loc}(\Omega)$ consists of functions $u: \Omega\to \C$ such that $u\in W^{k,p}(U)$ for every open subset $U\subset \subset \Omega$ (the latter means that $\overline{U}$ is compact and $\overline{U}\subset \Omega$).
Similarly, $C^{k,h}_\mathrm{loc}(\Omega)$ consists of $u\in C^k(\Omega)$ such that $u\in C^{k,h}(U)$ for every open $U\subset \subset \Omega$. 
We see with the aid of the partition of unity that a function $u:\Omega\to \C$ belongs to $W^{k,p}_\mathrm{loc}(\Omega)$ if and only if $u\in W^{k,p}(B)$ for every ball $B\subset\subset \Omega$.

We use $\lesssim$ to denote the inequality up to a constant whenever it is convenient.
Throughout what follows, $g:\Omega\times \C\to \C$ stands for a Carath\'eodory function, i.e., measurable with respect to the first variable and continuous with respect to the second one.

The Brezis--Kato theorem --- Thm 2.3 in \cite{Brezis_Kato} --- asserts that if $u\in W^{1,2}_\mathrm{loc}(\Omega)$ is a weak solution of 
\[
-\Delta u  = g(x,u)\ \text{in }\Omega,
\]
that is for every $\f\in C^\infty_0(\Omega)$ 
\[
\int_\Omega \nabla u \nabla \f \, dx = \int_\Omega g(x,u)\f\, dx
\]
and there exists a non-negative $a\in L^\frac{N}{2}_\mathrm{loc}(\Omega)$ such that
\[
|g(x,u(x))|\leq a(x)(1+|u(x)|)\ \text{a.e. in }\Omega,
\]
then $u\in \bigcap_{1\leq q<\infty} L^q_\mathrm{loc}(\Omega)$.
See also Lemma B.3 in Struwe's book \cite{Struwe} for a direct exposition of this result.

Notice that the regularity of solutions which the Brezis--Kato theorem asserts is higher than the one obtained through the elliptic regularity theory.
This has numerous important consequences, let us name just two.
The Brezis--Kato theorem has proved to be useful in the study of elliptic equations through the variational approach.
It enables one to show that critical points of certain functionals are classical $C^2$-solutions, see \cite{Struwe} and \cite{Willem}.
For another example, the Brezis--Kato theorem turns out to be crucial in proving the so-called Pohozaev identity, cf. \cite{Berestycki_Lions}.

The Brezis--Kato theorem has been extended to the case of the nonlocal right hand side in \cite{Moroz} and generalized to the case of the fractional Laplacian in \cite{Leite}.
The generalization to the bilaplacian operator has recently been proved in \cite{MS}.
To the best of the author’s knowledge these are the only results present in the literature.

In this paper we generalize the Brezis--Kato theorem in two directions.
Firstly, we consider the general elliptic differential operator with variable coefficients.
Secondly, we allow the elliptic operator to be of any even order.
Results presented here seems to be new even for the second order differential operator.

The proof of the main result Theorem \ref{thm} is inspired by \cite{Van_der_Vorst}.
It was developed and adjusted to the bilaplacian case in \cite{MS} and is presented here in a full generality.

The organization of the paper is the following.
In Section \ref{section:2} we provide the proof of the main result --- Theorem \ref{thm} --- divided into some number of lemmata.
Section \ref{section:3} contains Theorem \ref{thm:2} and Theorem \ref{thm:3} which are just versions of Theorem \ref{thm} adjusted to the weaker notions of solutions occurring in the literature.
\section{The Main Result}\label{section:2}

\begin{theorem}\label{thm}
Let $m \geq 1 $, $N > 2m $ be positive integers, $p>1$ and $\Omega\subset \R^N$ be open.
Let $ u \in W^{2m, p}_\mathrm{loc}(\Omega)$ be a \emph{strong solution} of $Lu = g(x,u)$, i.e.,
\begin{equation}\label{eq:13}
L u = g(x,u(x))\quad \text{a.e. in }\Omega,
\end{equation}
If there exists a non-negative $a\in L^\frac{N}{2m}_\mathrm{loc}(\Omega)$ such that
\begin{equation}\label{eq:1}
\begin{aligned}
 |g(x,u(x))| \leq a(x)(1 + |u(x)|), \quad\text{a.e. in } \Omega,
\end{aligned}
\end{equation}
then $u\in \bigcap_{1\leq q < \infty}L^q_\mathrm{loc}(\Omega)$.
If $g$ has a polynomial rate of growth, namely, there exists  $r\geq 1$ such that
\begin{equation}\label{poplynomial:1}
\begin{aligned}
|g(x,s)|\lesssim  1 + |s|^r\quad\text{for all }x\in \Omega\text{ and } s\in \C, \;\text{where }
r  =
\begin{cases}
\frac{N}{N-2mp} &\text{if } p<\frac{N}{2m},\\
\text{ is arbitary} &\text{if } p \geq \frac{N}{2m},
\end{cases} 
\end{aligned}
\end{equation}
then $u\in W^{2m,q}_\mathrm{loc}\cap C^{2m-1,h}_\mathrm{loc}(\Omega)$ for all $1\leq q < \infty$ and $0< h< 1$.
\end{theorem}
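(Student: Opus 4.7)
The plan is a Brezis--Kato--type iterative bootstrap on the local integrability of $u$, driven by the $L^s$-regularity theory for $2m$-th order elliptic operators. The starting exponent is free: from $u\in W^{2m,p}_{\mathrm{loc}}(\Omega)$ and the Sobolev embedding we have $u\in L^{q_0}_{\mathrm{loc}}$ with $\tfrac{1}{q_0}=\tfrac{1}{p}-\tfrac{2m}{N}$ in the subcritical range $p<N/(2m)$; the supercritical range $p\geq N/(2m)$ already gives $u\in L^q_{\mathrm{loc}}$ for every $q<\infty$ and makes the first claim immediate.

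The engine of the iteration is a localized improvement step. Fix a ball $B\subset\subset\Omega$ and a cutoff $\eta\in C^\infty_0(\Omega)$ equal to $1$ on a smaller ball $B'$; setting $v=\eta u$ we obtain $Lv=\eta\,g(x,u)+[L,\eta]u$ on $\R^N$, where $[L,\eta]$ has order at most $2m-1$ with $L^\infty_\mathrm{loc}$ coefficients. Because hypothesis (ii) forces the top-order coefficients of $L$ to be continuous, on sufficiently small $B$ a freezing-plus-perturbation of the principal part combined with the Agmon--Douglis--Nirenberg $L^s$-theory yields
\[
\|v\|_{W^{2m,s}(\R^N)} \;\leq\; C\bigl(\|Lv\|_{L^s(\R^N)}+\|v\|_{L^s(\R^N)}\bigr),\qquad 1<s<\infty.
\]
To break the critical scaling encoded in $a\in L^{N/2m}_{\mathrm{loc}}$, one uses absolute continuity of the integral to decompose $a=a_1+a_2$ on $B$ with $\|a_1\|_{L^{N/2m}(B)}<\delta$ (arbitrarily small) and $a_2\in L^\infty(B)$. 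Assuming inductively that $u\in L^Q_{\mathrm{loc}}$ and choosing $\tfrac{1}{s}=\tfrac{1}{Q}+\tfrac{2m}{N}$, H\"older's inequality applied to \eqref{eq:1} bounds $\|\eta\,g(x,u)\|_{L^s}$ by $\delta\,\|u\|_{L^{s^\ast}(B)}+C(1+\|a_2\|_{L^\infty})\|u\|_{L^s(B)}+C$, the commutator term being absorbed through interpolation of $W^{2m-1,s}$ between $W^{2m,s}$ and $L^Q$. The Sobolev embedding $W^{2m,s}\hookrightarrow L^{s^\ast}$ (with $\tfrac{1}{s^\ast}=\tfrac{1}{s}-\tfrac{2m}{N}$) and absorption, valid for $\delta$ small, then produce an a priori bound of $\|u\|_{L^{s^\ast}(B')}$ in terms of $\|u\|_{L^Q(B)}$. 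The a priori finiteness of $\|u\|_{L^{s^\ast}}$ that absorption requires is obtained by first truncating $a$ to $a^{(k)}=\min(a,k)$, so that $a^{(k)}\in L^\infty(B)$ and $\|a-a^{(k)}\|_{L^{N/2m}(B)}\to 0$ as $k\to\infty$, letting the tail play the role of $a_1$; one runs the estimate at the truncated level and passes to the limit $k\to\infty$. Iterating the improvement along a strictly increasing sequence of exponents exhausts every $L^q_{\mathrm{loc}}$ in finitely many steps.

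For the second assertion, once $u\in L^q_{\mathrm{loc}}$ for all $q<\infty$, the polynomial growth \eqref{poplynomial:1} gives $g(x,u)\in L^q_{\mathrm{loc}}$ for every $q$, whence the same elliptic $L^q$-regularity yields $u\in W^{2m,q}_{\mathrm{loc}}$, and Morrey's embedding $W^{2m,q}\hookrightarrow C^{2m-1,h}$ for $q$ sufficiently large produces $u\in C^{2m-1,h}_{\mathrm{loc}}(\Omega)$ for each $h\in(0,1)$. The principal obstacle throughout is the critical scaling of the assumption $a\in L^{N/2m}_{\mathrm{loc}}$: the naive chain ``$u\in L^Q\Rightarrow g(x,u)\in L^s\Rightarrow u\in W^{2m,s}\Rightarrow u\in L^{s^\ast}=L^Q$'' is tautological, and unlike the second-order setting of \cite{Brezis_Kato}, where testing against $|u|^{q-2}u$ exploits a nonnegative quadratic form to jump from $L^q$ to $L^{Nq/(N-2)}$ in one stroke, no comparable test function is available for $2m$-th order operators. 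One is therefore forced to go through the $L^p$-theory of $L$ and rely on absorption powered by the smallness of $a_1$ in $L^{N/2m}$, with the additional care that the freezing of the top-order coefficients must be carried out on balls small enough for the continuity assumption (ii) to control the perturbation.
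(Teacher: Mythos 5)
Your bootstrap never actually gains integrability as written. You fix $u\in L^Q_{\mathrm{loc}}$, choose $\tfrac1s=\tfrac1Q+\tfrac{2m}{N}$, estimate $\|\eta g(x,u)\|_{L^s}$ via H\"older, and then apply $W^{2m,s}\hookrightarrow L^{s^\ast}$ with $\tfrac1{s^\ast}=\tfrac1s-\tfrac{2m}{N}$; but these two choices force $s^\ast=Q$, so the step concludes $u\in L^Q_{\mathrm{loc}}$ from $u\in L^Q_{\mathrm{loc}}$ --- precisely the tautology you yourself flag, and no ``strictly increasing sequence of exponents'' ever materializes. The smallness of $\|a_1\|_{L^{N/2m}}$ does not by itself break this: to gain you would have to run the elliptic estimate at an exponent $\tilde s>s$ and absorb $\delta\|v\|_{L^{\tilde s^\ast}}$, which is illegitimate because the quantity being absorbed is exactly the one whose finiteness you are trying to prove. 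The paper escapes the circle differently: it writes $g(x,u)=\tilde a u+b$ with $\tilde a,b\in L^{N/2m}(B)$, uses Lemma \ref{lem:1} to split $\tilde a v=q_\e v+\wh f_\e$ with $\wh f_\e\in L^\infty$ and $\|q_\e\|_{L^{N/2m}}<\e$, and moves $q_\e v$ \emph{into the operator}. The remaining right-hand side $f_\e+F(u,\eta)$ then lies in $L^{N/2m}\cap L^{Np/(N-p)}$, strictly better than your $L^s$, which is what produces the genuine gain (from $W^{2m,p}$ to $W^{2m,Np/(N-p)}$ per step, Lemma \ref{lem:2}), after which finitely many iterations reach $p_k\geq\frac{N}{2m+1}$ and then all $q$.

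There is a second, related gap: you use the Agmon--Douglis--Nirenberg bound $\|v\|_{W^{2m,s}}\leq C(\|Lv\|_{L^s}+\|v\|_{L^s})$ as if it upgraded membership, but an a priori estimate only applies to functions already known to lie in $W^{2m,s}\cap W^{m,s}_0$; for $s>p$ this is exactly what is in question. Your proposed fix --- truncating $a^{(k)}=\min(a,k)$ and ``passing to the limit'' --- does not help, because truncating $a$ changes only the pointwise bound, not the equation $Lu=g(x,u)$: there is no approximating family of solutions converging to $u$ on which uniform estimates could be run. This is precisely what Proposition \ref{observation} in the paper supplies: one solves the perturbed Dirichlet problem $(\tilde A_q-Q_{\e,q})y=f$ in the better space $D_q$ (Kato's relatively bounded perturbation theorem, using the smallness of $q_\e$), and then identifies $y=v$ through the invertibility of $\tilde A_p-Q_{\e,p}$ at the exponent where $v$ is already known to live. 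Without an identification/uniqueness step of this kind your absorption argument does not close. Two smaller points: your H\"older bound carries $\|u\|_{L^{s^\ast}(B)}$ on the right while the left controls only $\|\eta u\|$, a domain mismatch (fixable by estimating $|\eta g(x,u)|\leq a\eta+a|v|$); and in the second assertion, for $p<\frac{N}{2m}$ you must first verify that \eqref{poplynomial:1} with $r=\frac{N}{N-2mp}$ implies \eqref{eq:1} with $a:=|g(x,u)|/(1+|u|)\in L^{N/2m}_{\mathrm{loc}}$ (using $u\in L^{Np/(N-2mp)}_{\mathrm{loc}}$), before the first part can be invoked.
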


\begin{remark}
Observe that the hypothesis $N > 2m$ in Theorem \ref{thm} is the only interesting case, for otherwise the Sobolev embedding $W^{2m,1}_\mathrm{loc}(\Omega)\subset \bigcap_{1\leq q<\infty} L^q_\mathrm{loc}(\Omega)$ immediately implies the assertion for any strong solution.
Form now on we confine our attention to $N>2m$ without further comment.

Let us also emphasize that without stronger assumptions Theorem \ref{thm} is false in general for $p=1$ , as there is a counterexample, see Theorem 3 in \cite{escauriaza}.
\end{remark}

Before proving Theorem \ref{thm}, we need some preparations first.

\begin{proposition}\label{prop2}
Let $s \geq \frac{N}{N-2m}$.
If for some $u\in L^s_\mathrm{loc}(\Omega)$ there is a non-negative $a\in L^\frac{N}{2m}_\mathrm{loc}(\Omega)$ such that
\[
|g(x,u(x))|\leq a(x) (1 +  |u(x)|), \quad \text{a.e. in }\Omega,
\]
then $g(\cdot,u) \in L^\frac{Ns}{N+2ms}_\mathrm{loc}(\Omega)$.
\end{proposition}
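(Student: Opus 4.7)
The strategy is essentially Hölder's inequality applied to the pointwise bound in the hypothesis, after verifying that the claimed target exponent $t := \frac{Ns}{N+2ms}$ falls in the right range.

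First I would fix a relatively compact open set $U \subset\subset \Omega$ and aim to show $g(\cdot,u) \in L^t(U)$. Using the assumption on $g$, write $|g(x,u(x))| \leq a(x) + a(x)|u(x)|$ a.e.\ in $U$, so it suffices to bound the $L^t(U)$-norms of $a$ and of $au$ separately.

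For the product $a|u|$, I would apply Hölder's inequality with the conjugate pair $\left( \frac{N}{2m}, s \right)$ relative to exponent $t$: the identity
\[
\frac{1}{t} = \frac{2m}{N} + \frac{1}{s} = \frac{N + 2ms}{Ns}
\]
is exactly the definition of $t$, so Hölder gives $\|au\|_{L^t(U)} \leq \|a\|_{L^{N/(2m)}(U)} \|u\|_{L^s(U)} < \infty$ since $a \in L^{N/(2m)}_{\mathrm{loc}}(\Omega)$ and $u \in L^s_{\mathrm{loc}}(\Omega)$. For the lone term $a$, note that $t \leq \frac{N}{2m}$ always (since $2ms \leq N + 2ms$), hence by Jensen/Hölder applied with the finite-measure set $U$ we have $\|a\|_{L^t(U)} \leq |U|^{1/t - 2m/N}\|a\|_{L^{N/(2m)}(U)} < \infty$.

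The only place where the hypothesis $s \geq \frac{N}{N-2m}$ is actually used is to guarantee $t \geq 1$: indeed $\frac{Ns}{N+2ms} \geq 1$ is equivalent to $s(N-2m) \geq N$, i.e.\ $s \geq \frac{N}{N-2m}$. So this threshold is precisely what makes $L^t$ a Banach space and legitimizes the standard form of Hölder's inequality invoked above. There is no real obstacle here; the whole content of the proposition is the exponent arithmetic, and the statement is essentially sharp for this splitting argument.
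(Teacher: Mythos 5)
Your proposal is correct and follows essentially the same route as the paper: split $|g(x,u)|\leq a + a|u|$, treat the lone term $a$ via the inclusion $L^{N/2m}\subset L^{t}$ on a set of finite measure (using $t\leq\frac{N}{2m}$), and bound $a|u|$ by Hölder with $\frac{1}{t}=\frac{2m}{N}+\frac{1}{s}$, which is exactly the paper's application of Hölder with exponents $\frac{N+2ms}{2ms}$ and $\frac{N+2ms}{N}$. Your added remark that $s\geq\frac{N}{N-2m}$ is what guarantees $t\geq 1$ is accurate and a useful clarification the paper leaves implicit.
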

\begin{proof}
For any ball $B\subset\subset \Omega$, we have
\[
\int_B |g(x,u)|^\frac{Ns}{N+2ms}\, dx \lesssim \int_B a^\frac{Ns}{N+2ms}\, dx + \int_B \le a |u|\pr ^\frac{Ns}{N+2ms}\, dx.
\]
Since $\frac{Ns}{N+2ms}<\frac{N}{2m}$, the first integral $\int_B a ^\frac{Ns}{N+2ms}\, dx$ is finite.
We use the H\"older inequality with the exponents
\[
\frac{1}{\frac{N+2ms}{2ms}} + \frac{1}{\frac{N+2ms}{N}}=1
\]
to get 
\[
\int_B \le a |u|\pr ^\frac{Ns}{N+2ms}\, dx \leq \le \int _B a^\frac{N}{2m}\, dx \pr ^\frac{2ms}{N+2ms} \le \int_B |u|^s \, dx\pr^\frac{N}{N+2ms} < \infty.
\]
Therefore $\int_B |g(x,u)|^\frac{Ns}{N+2ms}\, dx$ is finite what completes the proof.
\end{proof}

The following lemma is based on Lemma B.2 from \cite{Van_der_Vorst}.
\begin{lemma}\label{lem:1}
Let $1\leq p< \infty$ and $U\subset \R^N$ be a measurable set with $0<|U|<\infty$, where $|U|$ denotes the measure of $U$.
We assume that $a\in L^p(U)$ and $v\in L^1(U)$.
For every $\e>0$ there exists $q_\e \in L^p(U)$ and $f_\e\in L^\infty(U)$ such that $\|q_\e\|_{L^p(U)} < \e$ and
\[
 a (x) v(x) = q_\e(x) v(x) +  f _\e(x) \quad\text{for a.e. }x\in U.
\] 
\end{lemma}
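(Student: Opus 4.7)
The natural split $a = a\1_{\{|a|\le M\}} + a\1_{\{|a|> M\}}$ puts $a\1_{\{|a|>M\}}$ into a small $L^p$-tail, but leaves $a\1_{\{|a|\le M\}}\, v$ only in $L^1$, not in $L^\infty$. The remedy is to truncate $v$ as well, which is possible precisely because the identity we need to verify has $f_\e$ (not $f_\e/v$) on the left, so we are free to cut on the set where $v$ is large.

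Concretely, for parameters $M,n>0$ I would set
\[
E_{M,n}:=\{x\in U:\ |a(x)|\le M\ \text{ and }\ |v(x)|\le n\},
\]
and then define
\[
f_\e(x):=a(x)v(x)\,\1_{E_{M,n}}(x),\qquad q_\e(x):=a(x)\,\1_{U\setminus E_{M,n}}(x).
\]
By construction $a(x)v(x)=q_\e(x)v(x)+f_\e(x)$ pointwise a.e., and $|f_\e|\le Mn$, so $f_\e\in L^\infty(U)$. Also $q_\e$ coincides with $a$ on a subset of $U$, hence $q_\e\in L^p(U)$. It remains to make $\|q_\e\|_{L^p(U)}<\e$ by choosing $M$ and $n$ large.

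For that step, observe
\[
\|q_\e\|_{L^p(U)}^p=\int_U |a|^p\,\1_{\{|a|>M\}\cup\{|v|>n\}}\,dx.
\]
The sets $\{|a|>M\}\cup\{|v|>n\}$ shrink, as $M,n\to\infty$, to a null set (since $a\in L^p(U)$ and $v\in L^1(U)$ are finite a.e.), while the integrand is dominated by $|a|^p\in L^1(U)$. Dominated convergence therefore yields $\|q_\e\|_{L^p(U)}\to 0$, so any sufficiently large choice of $M,n$ gives $\|q_\e\|_{L^p(U)}<\e$.

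The main conceptual point—rather than an obstacle—is the realisation that one must truncate both $a$ and $v$ simultaneously: truncating $a$ alone controls the $L^p$-norm of $q_\e$ but leaves $f_\e$ unbounded, while truncating $v$ alone does not interact with the $L^p$-norm of the remainder. Using the product set $E_{M,n}$ resolves both requirements at once, and the finiteness $|U|<\infty$ is used only implicitly to guarantee that $v$ is a.e.\ finite on $U$.
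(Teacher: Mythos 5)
Your proof is correct and follows essentially the same route as the paper: truncate $a$ and $v$ simultaneously at a large level, put the tail of $a$ into $q_\e$ (made small via dominated convergence / absolute continuity of $\int |a|^p$), and absorb the bounded product on the good set into $f_\e$; the paper's only cosmetic difference is that it keeps $\frac{1}{k}a$ rather than $0$ as $q_\e$ on the good set. (Minor aside: a.e.\ finiteness of $v$ follows from $v\in L^1(U)$ alone, not from $|U|<\infty$.)
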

\begin{proof}
We define the sets
\[
A_k = \left\{ x\in U\mid |a(x)|<k\right\}\quad\text{and}\quad B_k=\left\{x\in U\mid|v(x)|<k\right\}.
\]
Since $a$, $v\in L^1(U)$, we have $|A_k^c|$, $|B_k^c|\to 0$. 
Observe that
\[
\begin{aligned}
|A_k^c \cup B_k^c| = |A_k^c| + |B_k^c| - |A_k^c\cap B_k^c| \leq |A_k^c| + |B_k^c|\to 0,
\end{aligned}
\]
so there is $k_1$ such that for $k\geq k_1$
\[
|A_k\cap B_k| = |U|- |A_k^c\cup B_k^c| > 0.
\]
Fix $\e>0$.
By the absolute continuity of the measure $\mu(C) := \int_{C}|a(x)|^p\d x$, we find $k_2\geq k_1$ such that 
\begin{equation}\label{eq:3}
\int_{A_k^c \cup B_k^c}|a(x)|^p\d x < \le \frac{1}{2}\e\pr ^{p}, \quad \text{for }k\geq k_2.
\end{equation}
We are in a position to define 
\[
\begin{aligned}
q_\e(x) : &=
\begin{cases}
\frac{1}{k}a(x), &\text{for }x\in A_k\cap B_k,\\
a(x), &\text{for }x\in (A_k\cap B_k)^c,
\end{cases}
\quad\text{and}\quad  f_\e &:= (a(x)-q_\e(x))v(x).
\end{aligned}
\]
We estimate using the definition of $q_\e$ and \eqref{eq:3}, for $k\geq k_2$,
\[
\begin{aligned}
\|q_\e\|_{L^p(U)} &\leq  \frac{1}{k}\le \int_{A_k\cap B_k} |a(x)|^p \d x\pr ^{1/p} + \le \int_{(A_k\cap B_k)^c} |a(x)|^p\pr^{1/p} \d x < \frac{1}{k}\|a\|_{L^p(U)}+ \frac{1}{2}\e.
\end{aligned}
\]
If $k\geq \max\left\{\frac{2\|a\|_{L^p(U)}}{\e}, k_2\right\}$, then
\[
\|q_\e\|_{L^p(U)} < \e.
\]
Observe that $f_\e = 0$ on $(A_k\cap B_k)^c$, hence we get
\[
|f_\e(x)|= \left |a(x) - \frac{1}{k}a(x)\right ||v(x)|\1_{A_k\cap B_k}(x)= \frac{k-1}{k}|a(x)||v(x)|\1_{A_k\cap B_k}(x)\leq (k-1)k,
\]
in view of the definitions of $A_k$ and $B_k$.
\end{proof}

The next lemma is crucial. It is a modification of Lemma 2.2 in \cite{MS}.

\begin{lemma}\label{lem:2}
If $u\in W^{2m,p}_\mathrm{loc}(\Omega)$, with $p > 1$, is a strong solution of 
\[
Lu = g(x,u)\quad \text{in }\Omega,
\]
and \eqref{eq:1} holds, then 
\[
u\in 
\begin{cases}
L^\frac{Np}{N-(2m+1)p}_\mathrm{loc}(\Omega), &\text{if }p < \frac{N}{2m+1},\\
\bigcap_{1\leq q<\infty}L^q_\mathrm{loc}(\Omega) , &\text{if }p\geq \frac{N}{2m+1}.
\end{cases}
\]
\end{lemma}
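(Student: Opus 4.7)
The plan is to localize the equation by a cutoff, use Lemma~\ref{lem:1} to split the dangerous term $a|u|$ into a part with small $L^{N/(2m)}$-norm plus a bounded remainder, absorb the small part into the elliptic operator via a perturbation, and Sobolev-embed the improved $W^{2m,r}$-regularity thus obtained.

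Fix balls $B'\subset\subset B\subset\subset\Omega$ and a cutoff $\eta\in C^\infty_0(B)$ with $\eta\equiv 1$ on $B'$ and $0\leq\eta\leq 1$; set $w:=\eta u\in W^{2m,p}_0(B)$. The Leibniz rule gives $Lw = \eta g(x,u) + R_\eta u$, where the commutator $R_\eta u := [L,\eta]u$ is a linear combination of derivatives $D^\gamma u$, $|\gamma|\leq 2m-1$, with coefficients bounded on $B$. Since $D^\gamma u\in W^{2m-|\gamma|,p}_\mathrm{loc}(\Omega)$, Sobolev embedding places $R_\eta u\in L^{Np/(N-p)}(B)$ (the binding case being $|\gamma|=2m-1$). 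This fixes the target exponent: take $r:=Np/(N-p)$ when $p<N/(2m+1)$ (so automatically $r<N/(2m)$), and any $r\in(p,N/(2m))$ otherwise.

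Given $\varepsilon>0$, apply Lemma~\ref{lem:1} (with its $p$ being $N/(2m)$) to the pair $(a,|w|)$ on $B$ to write $a|w| = q_\varepsilon|w| + f_\varepsilon$ with $\|q_\varepsilon\|_{L^{N/(2m)}(B)}<\varepsilon$ and $f_\varepsilon\in L^\infty(B)$. Combined with the pointwise bound $|\eta g(x,u)|\leq \eta a + a|w|$ from \eqref{eq:1} and the interior $L^r$ estimate for the strongly elliptic operator $L$,
\[
 \|w\|_{W^{2m,r}(B)} \leq C\bigl(\|Lw\|_{L^r(B)}+\|w\|_{L^r(B)}\bigr),
\]
one arrives at
\[
 \|w\|_{W^{2m,r}(B)} \leq C\bigl(\|a\|_{L^r(B)}+\|q_\varepsilon w\|_{L^r(B)}+\|f_\varepsilon\|_{L^\infty(B)}|B|^{1/r}+\|R_\eta u\|_{L^r(B)}+\|w\|_{L^r(B)}\bigr).
\]
By Hölder and Sobolev,
\[
 \|q_\varepsilon w\|_{L^r(B)}\leq \|q_\varepsilon\|_{L^{N/(2m)}(B)}\|w\|_{L^{Nr/(N-2mr)}(B)}\leq C\varepsilon\,\|w\|_{W^{2m,r}(B)}.
\]
Choosing $\varepsilon$ with $C\varepsilon<1/2$ absorbs this term on the left; the remaining terms are finite from the hypothesis $a\in L^{N/(2m)}_\mathrm{loc}$, the $L^\infty$-control of $f_\varepsilon$, the integrability of $R_\eta u$, and the baseline Sobolev embedding of $u\in W^{2m,p}_\mathrm{loc}(\Omega)$. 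Since $\eta\equiv 1$ on $B'$, this gives $u\in W^{2m,r}(B')$, and Sobolev embedding converts it into $u\in L^{Np/(N-(2m+1)p)}_\mathrm{loc}(\Omega)$ in the first case and, by letting $r\uparrow N/(2m)$, into $u\in\bigcap_{q<\infty}L^q_\mathrm{loc}(\Omega)$ in the second.

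The main obstacle is making the $L^r$-regularity step rigorous when $r>p$: the Calderón--Zygmund bound displayed above is an a priori estimate on $\|w\|_{W^{2m,r}(B)}$, while beforehand $w$ only belongs to $W^{2m,p}_0(B)$. One resolves this by solving the Dirichlet problem for $L+\mu I$ on a small enough ball in $W^{2m,r}_0$ (possible for $\mu$ large, by standard existence theory compatible with the assumed coefficient regularity) and using uniqueness in $W^{2m,p}_0(B)$ to identify the solution with $w$, or equivalently by invoking the standard interior-regularity theorem which upgrades $W^{2m,p}$-solutions to $W^{2m,r}$ whenever the right-hand side lies in $L^r$. The smallness of $\varepsilon$ is exactly what ensures that the effective right-hand side, after absorbing $q_\varepsilon w$ into $L$, actually belongs to $L^r$; without it the whole scheme would be circular.
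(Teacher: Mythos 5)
Your overall strategy (localize by a cutoff, split the troublesome term via Lemma \ref{lem:1}, absorb the small multiplier into the operator, then Sobolev-embed) is the paper's strategy, and your exponent bookkeeping is right; but the step you yourself single out as the crux is not closed, and as written it cannot be. You apply Lemma \ref{lem:1} to the pair $(a,|w|)$, which gives only the pointwise inequality $|\eta g(x,u)|\leq \eta a+q_\e|w|+f_\e$, not an identity expressing $\eta g(x,u)$ as $q_\e w$ plus an $L^r$ function. Consequently there is no exact equation of the form $(L-q_\e)w=F$ with $F\in L^r(B)$: the actual right-hand side $Lw=\eta g(x,u)+R_\eta u$ (and likewise $(L-q_\e)w$) is a priori only in $L^p(B)$, since $\eta g(\cdot,u)$ is controlled by $a(1+|u|)$ with $u\in L^{Np/(N-2mp)}$ and $a\in L^{N/2m}$. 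So your final claim that ``the effective right-hand side, after absorbing $q_\e w$ into $L$, actually belongs to $L^r$'' is false with your splitting, and both of your proposed rigorous routes --- solving the Dirichlet problem for $L+\mu I$ in $W^{2m,r}_0$ and identifying by uniqueness, or citing interior regularity ``whenever the right-hand side lies in $L^r$'' --- have a hypothesis that is exactly what is missing. The a priori estimate with absorption only bounds $\|w\|_{W^{2m,r}}$ \emph{if} one already knows $w\in W^{2m,r}$; it does not produce the regularity.

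The paper repairs precisely this point in two moves that your proposal skips. First, it writes the nonlinearity exactly as $g(x,u)=\tilde a u+b$ with $\tilde a:=\frac{g(x,u)}{u}\1_{\{|u|>1\}}$ and $b:=g(x,u)\1_{\{|u|\leq 1\}}$, so that $|\tilde a|,|b|\leq 2a\in L^{N/2m}(B)$, and then applies Lemma \ref{lem:1} to the pair $(\tilde a,v)$, obtaining the genuine identity $Lv-q_\e v=f_\e+F(u,\eta)$ with right-hand side in the desired $L^r(B)$. Second, since $q_\e$ is merely an $L^{N/2m}$ potential, ``standard existence theory'' for $L-q_\e+\mu I$ is not available off the shelf; the paper proves Proposition \ref{observation} by combining Browder's $L^s$-realization and a priori estimate with Kato's theorem on relatively bounded perturbations (the smallness $\|q_\e\|_{L^{N/2m}}<\e$ makes the relative bound less than one), inverts $\tilde A_s-Q_{\e,s}$, and identifies the $W^{2m,q}$ solution with $v$ through injectivity in the larger domain $D_p$, iterating in finitely many Sobolev steps when the target exponent is more than one step away. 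Without the exact decomposition and this perturbation/identification argument (or an equivalent substitute), your absorption scheme remains, as you put it, circular.
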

\begin{proof}
Observe that if $2mp\geq N$, then we are done due to the Sobolev embedding $W^{2m,p}_\text{loc}(\Omega)\subset  L^q_\text{loc}(\Omega)$ for  $1\leq q<\infty$.
We therefore assume that $2mp < N$.

Take some ball $B_0\subset \subset \Omega$ and choose another ball $B$ such that $ B_0\subset\subset  B \subset\subset \Omega$.
Take a smooth cut-off function $\eta \in C^\infty_0(B)$ satisfying $\eta \equiv 1$ on $ B_0$ and $0\leq \eta \leq 1$.
Let us define 
\[
\begin{aligned}
\tilde{a}(x) &:= \frac{g(x,u(x))}{u(x)}\1 _{\left\{ x\mid |u(x)|> 1\right\}}(x),\\
b(x) &:=  g(x, u(x))\1_{\left\{ x\mid |u(x)|\leq 1\right\}}(x),
\end{aligned}
\]
for $x\in B$, where it is understood that $\tilde a (x)=0$ whenever $u(x) = 0 $.
We then have
\[
g(x,u(x)) = \tilde{a}(x)u(x) + b(x), \quad x\in B,
\]
and by \eqref{eq:1}
\[
\begin{aligned}
|\tilde{a}(x)| &\leq \frac{a(x)(1+|u(x)|)}{|u(x)|} \1 _{\left\{ x\mid |u(x)|> 1\right\}}(x)\leq \frac{a(x)(2|u(x)|)}{|u(x)|} \1 _{\left\{ x\mid |u(x)|> 1\right\}}(x) \leq 2a(x),\\
|b(x)| &\leq a(x)(1+|u(x)|)\1_{\left\{ x\mid |u(x)|\leq 1\right\}}(x)\leq 2 a(x),
\end{aligned}
\]
so $\tilde a$, $b \in L^\frac{N}{2m}(B)$.
Moreover, $u$ is a strong solution of
\begin{equation}\label{eq:6}
L u = \tilde{a}(x)u + b(x)\quad\text{in }B.
\end{equation}

Let us consider 
\[
v := u\eta \in W^{2m,p}(B)\cap W^{m,p}_0(B).
\]
By the Leibniz formula for any multi-index $|\alpha|\leq 2m$ we get
\[
D^\alpha v= \le D^\alpha u \pr \eta +
\sum_{\beta < \alpha} \binom{\alpha}{\beta}D^\beta u D^{\alpha-\beta} \eta,
\]
and so 
\begin{equation}\label{eq:5}
\begin{aligned}
L v &=(-1)^m\sum_{|\alpha|\leq 2m}a_\alpha(x) D^\alpha v = \sum_{|\alpha|\leq 2m}a_\alpha(x) \le \le D^\alpha u \pr \eta +
\sum_{\beta < \alpha} \binom{\alpha}{\beta}D^\beta u D^{\alpha-\beta} \eta\pr \\
&= \le L u\pr \eta + F(u, \eta),
\end{aligned}
\end{equation}
where 
\[
F(u,\eta)=\sum_{\beta < \alpha,\, |\alpha| = 2m} c_{\alpha,\beta}(x)D^\beta u D^{\alpha-\beta} \eta,
\]
for some coefficients $c_{\alpha,\beta}\in L^\infty(B)$.
Since $u\in W^{2m,p}(B)\subset W^{2m-1,\frac{Np}{N-p}}(B)$, we estimate
\begin{equation}\label{eq:2}
\left\| F(u,\eta)\right \|_{L^\frac{Np}{N-p}(B)} \lesssim  \|u\|_{W^{2m-1,\frac{pN}{N-p}}(B)}\|\eta\|_{W^{2m,\infty}(B)} \lesssim \|u\|_{W^{2m,p}(B)}.
\end{equation}
By Lemma \ref{lem:1}, for every $\e>0$ there is $q_\e\in L^\frac{N}{2m}(B)$ and $\wh f_\e\in L^\infty(B)$ such that
\begin{equation}\label{eq:4}
\tilde a(x) v(x) = q_\e(x) v(x) + \wh f_\e(x)\quad\text{and}\quad \|q_\e\|_{L^\frac{N}{2m}(B)}< \e.
\end{equation}
We use \eqref{eq:5}, \eqref{eq:6} and \eqref{eq:4}
\[
\begin{aligned}
L v &=  \le \tilde{a}(x)u + b(x)\pr \eta +F(u,\eta)= \tilde{a}(x)v + b(x)\eta + F(u, \eta)\\
&= q_\e(x)v  + \underbrace{\wh f_\e(x) + b(x)\eta}_{f_\e} + F(u,\eta) = q_\e(x)v  + f_\e(x) +F(u,\eta),
\end{aligned}
\]
with
\begin{equation}\label{eq:7}
f_\e \in L^\frac{N}{2m}(B).
\end{equation}
We therefore arrive at 
\begin{equation}\label{eq:23}
Lv - q_\e v = f_\e + F(u,\eta).
\end{equation}

\begin{proposition}\label{observation}
Let $1< p < q <\frac{N}{2m}$ and $v\in W^{2m,p}(B)\cap W^{m,p}_0(B)$.
If for every $\e>0$ we have $Lv - q_\e v \in L^q(B)$ whith $\|q_\e\|_{L^\frac{N}{2m}(B)}<\e$, then $v\in W^{2m,q}(B)$.
\end{proposition}
\begin{proof}

We recall some theory about differential and unbounded operators, see \cite{Browder} and \cite{Kato} for details and proofs.
Fix $1<s<\infty$.
Let $A_s$ be the $L^s(B)$--realization of the elliptic differential operator $L = (-1)^m\sum_{|\alpha|\leq 2m}a_\alpha(x)D^\alpha$ satisfying (i) and (ii) under null Dirichlet boundary conditions.
The domain $D_s$ of $A_s$ equals to $W^{2m,s}(B)\cap W^{m,s}_0(B)$.
For $u\in D_s$ we set $A_s u = Lu = (-1)^m\sum_{|\alpha|\leq 2m}a_\alpha(x)D^\alpha u$ and the latter is a well-defined element of $L^s(B)$.
There exists a constant $\tilde{k}_s>0$ such that for every $u\in D_s$
\begin{equation}\label{eq:12}
\|u\|_{W^{2m,s}(B)}\leq \tilde{k}_s \le \|Lu\|_{L^s(B)} +  \|u\|_{L^s(B)}\pr,
\end{equation}
see formula (1) in \cite{Browder}.

By Thm 2 of \cite{Browder}, the inverse operator $(A_s+ \zeta I)^{-1}:L^s(B)\to D_s\subset L^s(B)$ exists (notice that $ D(A_s + \zeta I) =D_s$)  for large positive $\zeta$.
We fix such a large $\zeta>0$ and consider the invertible operator $\tilde{A}_s := A_s + \zeta I$.
For $f\in L^s(B)$ we set $u =  \tilde{A}_s^{-1}f \in D_s$.
Then 
\[
\|u\|_{L^s(B)} \leq \|\tilde{A}_s^{-1}\|\|f\|_{L^s(B)} = c_s\|\tilde{A}_s u\|_{L^s(B)}
\]
The above and the inequality \eqref{eq:12} applied to the operator $L+\zeta I$ yields
\begin{equation}\label{eq:21}
\|u\|_{W^{2m,s}(B)}\leq  k_s\|\tilde A _s u\|_{L^s(B)},\quad u \in D_s.
\end{equation}

Let us also consider for $1<s<\frac{N}{2m}$ the multiplication operator $Q_{\e,s}$ considered as an unbounded operator from $L^s(B)$ into $L^s(B)$ defined by 
\[
Q_{\e,s} u = q_\e u
\] 
for $u$ in the domain $D(Q_{\e,s})$.
Observe that for any $u\in L^\frac{Ns}{N-2ms}(B)$ we have
\begin{equation}\label{eq:22}
\|Q_{\e,s}\|_{L^s(B)} \leq \|q_\e\|_{L^\frac{N}{2m}(B)}\|u\|_{L^\frac{Ns}{N-2ms}(B)} ,
\end{equation}
where we used the H\"older inequality with the exponents
\[
\frac{1}{s} = \frac{1}{\frac{N}{2m}} + \frac{1}{\frac{Ns}{N-2ms}}.
\]
For $u\in D_s$  we utilize \eqref{eq:22}, the Sobolev embedding $W^{2m,s}(B)\subset L^\frac{Ns}{N-2ms}(B)$ and \eqref{eq:21} to deduce
\[
\|Q_{\e,q}u\|_{L^q(B)} \leq c_\mathrm{Sobolev}\|q_\e\|_{L^\frac{N}{2m}(B)} \|u\|_{W^{2m,q}(B)} \leq c_\mathrm{Sobolev} k_ s \|q_\e\|_{L^\frac{N}{2m}(B)} \|\tilde A_s u\|_{L^q(B)}.
\]
Therefore $Q_{\e,s}$ is relatively bounded with respect to $\tilde{A}_s$, see \S 4 of \cite{Kato} for the definitions and details.
If $\|q_\e\|_{L^\frac{N}{2m}(B)}$ is small enough so that $\|q_\e\|_{L^\frac{N}{2m}(B)} c_\mathrm{Sobolev}k_s<1$, then $(\tilde{A}_s - Q_{\e,s})^{-1}:L^s(B) \to D_s \subset L^s(B)$ exists in view of Thm 1.16 in \S 4.4 of \cite{Kato}.
By \eqref{eq:4} we can always assume this is the case.

\textit{Case I:} $q \leq \frac{Np}{N-2mp}$

In this case $v \in W^{2m,p}(B) \subset L^\frac{Np}{N-2mp}(B)\subset L^q(B)$ and so
\[
\underbrace{Lv - q_\e v }_{\in  L^q(B)} +\underbrace{\zeta v}_{\in L^q(B)} =:f \in L^q(B),
\]
where $\zeta $ is large enough so that  both $\tilde A_q^{-1}$ and $\tilde A_p ^{-1}$ exist.
We define 
\[
y:= (\tilde{A}_q - Q_{\e,q})^{-1}f\in D_q,
\]
then
\[
(\tilde{A}_q  - Q_{\e,q})y = f.
\]
On the other hand, we notice that $v\in D_p$ so $(\tilde{A}_p  - Q_{\e,p})v$ is well-defined and equals to $f$.
Since $q>p$ and $D_q\subset D_p$, we may consider $\tilde{A}_q$ as a restriction of $\tilde{A}_p$ and thus
\[
(\tilde{A}_q  - Q_{\e,q})y = (\tilde{A}_p  - Q_{\e,p})y.
\]
This yields
\[
(\tilde{A}_p  - Q_{\e,p})(y-v) =  0.
\]
Since $\tilde{A}_p  - Q_{\e,p}$ is invertible, we get 
\[
v = y \in D^q\subset W^{2m,q}(B)
\]
as claimed.

\textit{Case II:} $q > \frac{Np}{N-2mp}$

We proceed similarly as in the previous case, but this time
\[
Lv  - q_\e v + \zeta v \in L^\frac{Np}{N-2mp}(B), \quad \zeta >0,
\]
and, reasoning as above, we can conclude that $v\in D_\frac{Np}{N-2mp}$.
We define $p_i := \frac{Np}{N-2mip}$ for $i = 0,\ldots,k+1$, where $k\geq 1$ is the integer satisfying
\[
k < \frac{N(q-p)}{2mpq}\leq k+1.
\]
Repeating this procedure $k$ times we see that $v\in D_{p_k}$ where $p_k$  satisfies  $p_k < q \leq p_{k+1}$, and so $v\in W^{2m,p_k}(B)\subset L^{p_{k+1}}(B) \subset L^q(B)$.
We then have 
\[
Lv - q_\e v + \zeta v \in L^q(B), \quad \zeta > 0,
\]
and proceeding as in \textit{Case I} we show that $u\in W^{2m,q}(B)$.
\end{proof}

We continue the proof of Lemma \ref{lem:2} by considering two cases.
If $p<\frac{N}{2m+1}$, then for $q =  \frac{Np}{N-p}$ we estimate using \eqref{eq:2}
\[
\|f_\e + F(u,\eta)\|_{L^q(B)} \lesssim \|f_\e\|_{L^\frac{N}{2m}(B)}  + \|u\|_{W^{2m,p}(B)},
\]
By \eqref{eq:7} and the hypothesis the $L^q(B)$ norm of the right hand side of \eqref{eq:23} is finite.
By Proposition \ref{observation} $v\in W^{2m,q}(B) \subset L^\frac{Np}{N-(2m+1)p}(B)$.
Since $v = u$ on $B_0$ and $B_0$ was arbitrary, we infer that $u\in L^\frac{Np}{N-(2m+1)p}_\mathrm{loc}(\Omega)$.

If $p\geq \frac{N}{2m+1}$, we proceed in a similar fashion as in Case I.  
Fix any $q \geq \frac{N}{N- 2mp}$ and define $r:= \frac{Nq}{N+2mq}$.
Then we get the following relations 
\[
1 < r <\frac{N}{2m}\leq \frac{Np}{N-p}.
\] 
We estimate as above using \eqref{eq:2}
\[
\|f_\e + F(u,\eta)\|_{L^r(B)} \lesssim \|f_\e\|_{L^\frac{N}{2m}(B)}  + \|u\|_{W^{2m,p}(B)}.
\]
We observe that the right hand side of \eqref{eq:23} has finite $L^r(B)$ norm by \eqref{eq:7} and the hypothesis.
Proposition \ref{observation} yields $v\in W^{2m,r}(B) \subset L^q(B)$.
Once again it implies that $u\in L^q(B_0)$.
Since  $B_0\subset \Omega$ and $q\geq \frac{N}{N-2mp}$ were arbitrary $u\in \bigcap_{1\leq q < \infty} L^q_\mathrm{loc}(\Omega)$.
\end{proof}

We need the following observation which bears great resemblance to Proposition \ref{observation}.
We present the proof for the sake of the clarity.
\begin{proposition}\label{prop}
Let $u\in W^{2m,p}_\mathrm{loc}(\Omega)$ satisfy $Lu \in L^q_\mathrm{loc}(\Omega)$ for some $1< p<q<\infty$.
Then $u\in W^{2m,q}_\mathrm{loc}(\Omega)$.
\end{proposition}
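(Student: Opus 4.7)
The plan mirrors the argument of Proposition~\ref{observation}, only without the perturbation $q_\eps v$ to contend with, which simplifies matters considerably. One localizes, invokes the shifted elliptic realization $\tilde A_s := A_s + \zeta I$ on a ball, and iterates the resulting gain in integrability until one reaches $L^q$.

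Fix $B_0 \subset\subset \Omega$, choose a ball $B$ with $B_0 \subset\subset B \subset\subset \Omega$, and pick a cut-off $\eta \in C^\infty_0(B)$ with $\eta \equiv 1$ on $B_0$. Put $v := u\eta \in W^{2m,p}(B) \cap W^{m,p}_0(B) = D_p$. By the Leibniz rule, exactly as in \eqref{eq:5},
\[
Lv = (Lu)\eta + F(u,\eta),
\]
where $F(u,\eta)$ is a finite sum of terms of the form $c_{\al,\be}(x)\,D^\be u\,D^{\al-\be}\eta$ with $|\be|\leq 2m-1$ and $c_{\al,\be}\in L^\infty(B)$.

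The core is an upgrading step: if $u \in W^{2m,s}_\mathrm{loc}(\Omega)$ for some $p \leq s < q$, then $u \in W^{2m,s_1}_\mathrm{loc}(\Omega)$, where $s_1 := \min(q,\, Ns/(N-s))$ when $s<N$ and $s_1 := q$ when $s \geq N$. Indeed, the Sobolev embedding gives $D^\be u \in L^{s_1}(B)$ for every $|\be|\leq 2m-1$, so $F(u,\eta)\in L^{s_1}(B)$; together with $Lu \in L^q(B) \subset L^{s_1}(B)$ this yields $Lv + \zeta v \in L^{s_1}(B)$. Choose $\zeta$ large enough that $\tilde A_r$ is invertible from $D_r$ onto $L^r(B)$ for both $r = s$ and $r = s_1$, set $y := \tilde A_{s_1}^{-1}(Lv + \zeta v) \in D_{s_1} \subset D_s$, and use that $\tilde A_{s_1}$ is the restriction of $\tilde A_s$: then $\tilde A_s y = Lv + \zeta v = \tilde A_s v$, and invertibility of $\tilde A_s$ forces $v = y \in D_{s_1}$, so $u = v$ on $B_0$ lies in $W^{2m,s_1}(B_0)$; as $B_0$ was arbitrary, $u \in W^{2m,s_1}_\mathrm{loc}(\Omega)$.

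Iterating this upgrading step from $s_0 := p$, the reciprocal exponent $1/s_k$ drops by at least $1/N$ at every stage (as long as $s_k < \min(N,q)$), so $s_k$ reaches $q$ in finitely many iterations, yielding $u \in W^{2m,q}_\mathrm{loc}(\Omega)$. The main delicate point, familiar from Proposition~\ref{observation}, is the bookkeeping: one should shrink the compactly-contained ball $B_0 \subset\subset B$ only finitely many times and pick a $\zeta$ large enough to invert $\tilde A_r$ simultaneously at the two exponents $r=s_k$ and $r=s_{k+1}$ at each step; both tasks are routine given the machinery already assembled.
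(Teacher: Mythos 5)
Your proposal is correct and follows essentially the same route as the paper: localize with a cut-off, write $Lv=(Lu)\eta+F(u,\eta)$, use the invertible realization $\tilde A_s=A_s+\zeta I$ together with the restriction/injectivity argument to identify $v$ with $\tilde A_{s_1}^{-1}(Lv+\zeta v)\in D_{s_1}$, and bootstrap the Sobolev exponent finitely many times. Your formulation of the upgrading step at the level of $u\in W^{2m,s}_\mathrm{loc}(\Omega)$ (exploiting that $B_0$ is arbitrary before iterating) is just a slightly more explicit bookkeeping of the paper's two-case iteration, not a different method.
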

\begin{proof}
The proof is similar in spirit to that of Proposition \ref{observation}.
Fix $B_0\subset\subset \Omega$ and choose another ball $B$ such that $B_0\subset\subset B\subset\subset \Omega$.
Let $\eta\in C^\infty_0(B)$ be smooth cut--off function such that $0\leq \eta\leq 1$ and $\eta=1$ on $B_0$.
We consider the function 
\[
v := \eta u \in W^{2m,p}(B)\cap W^{m,p}_0(B)
\]
which satisfies (as in \eqref{eq:5})
\[
Lv = (Lu)\eta + \sum_{\beta < \alpha,\, |\alpha| = 2m} c_{\alpha,\beta}(x)D^\beta u D^{\alpha-\beta} \eta = (L u)\eta + F(u,\eta).
\]
We denote by $\tilde A_s$ the $L^s(B)$--realization of $L +\zeta I$.
Observe we only need to show that $v\in W^{2m,q}(B)$.

We consider two cases.
If $W^{2m,p}(B)\subset W^{2m-1,q}(B)$ (that is either $p>N$ or $q\leq \frac{Np}{N-p}$), then
\[
\|F(u,\eta)\|_{L^q(B)}\lesssim \|u\|_{W^{2m-1,q}(B)}\|\eta\|_{W^{2m,\infty}(B)}\lesssim\|u\|_{W^{2m,p}(B)},
\]
what yields
\[
Lv\in L^q(B).
\]
We choose $\zeta >0$ large enough so that both $\tilde{A}_p^{-1}$ and $\tilde A_q^{-1}$ exist and set $f:= Lv +\zeta v \in L^q(B)$.
We consider
\[
y = A_q^{-1}f \in D_q = W^{2m,q}\cap W^{m,q}_0(B)\subset D_p.
\]
Then we have $y - v \in D_p$ and 
\[
\tilde A_p(y - v) =(L + \zeta )(y - v) = 0
\]
Since $\tilde A_p:D_p \to L^p(B)$ is a bijection, we obtain $y = v$ and so $v \in W^{2m,q}(B)$.

If $W^{2m,p}(B) \not\subset W^{2m-1,q}(B)$ (that is $p<N$ and $q > \frac{Np}{N-p}$), then as in \eqref{eq:5}
\[
\|F(u,\eta)\|_{L^\frac{Np}{N-p}(B)}\lesssim\|u\|_{W^{2m,p}(B)}.
\]
Hence
\[
Lv\in L^\frac{Np}{N-p}(B)
\]
and, similarly as above, this implies that $v\in D_\frac{Np}{N-p}$.
After finite number of such steps we find $s < N$ such that $v\in D_s\subset W^{2m,s}(B) \subset L^q(B)$.
Then the same argument as in the previous case applies to show that $v\in W^{2m,q}(B)$.

Summarizing, in both cases $v\in W^{2m,q}(B)$ and the proof is completed.
\end{proof}

\begin{proof}[Proof of Theorem \ref{thm}]
Let $p>1$ and $u\in W^{2m,p}_\mathrm{loc}(\Omega)$.
If $p\geq \frac{N}{2m+1}$, then $u\in \bigcap_{1\leq q<\infty }L^q_\mathrm{loc}(\Omega)$ due to Lemma \ref{lem:2}.
We thus assume that $p < \frac{N}{2m+1}$.

First, we show that whenever $u\in W^{2m,s}_\text{loc}(\Omega)$ with some $1<s<\frac{N}{2m+1}$ is the strong solution of 
\[
Lu = g(x,u) \quad\text{in }\Omega,
\]
then  $u\in W^{2m,\frac{Ns}{N-s}}_\text{loc}(\Omega)$.
Indeed, Lemma \ref{lem:2} implies that $u\in L^\frac{Ns}{N-(2m+1)s}_\mathrm{loc}(\Omega)$ and $g(\cdot,u)\in L^\frac{Ns}{N-s}_\mathrm{loc}(\Omega)$ in view of Proposition \ref{prop2}.
Since $Lu = g(x,u)\in  L^\frac{Ns}{N-s}_\mathrm{loc}(\Omega)$, Proposition \ref{prop} yields $u\in W^{2m,\frac{Ns}{N-s}}_\mathrm{loc}(\Omega)$ as claimed.

Let $k\geq 1$ be the integer satisfying
\[
k -1 < \frac{N-(2m+1)p}{p} \leq k
\]
and define
\[
p_i := \frac{Np}{N-ip}, \quad\text{for } i=0,\ldots, k.
\]
Notice that $k$ is chosen in such a way that $p_{k-1} < \frac{N}{2m+1}$ and $p_k\geq \frac{N}{2m+1}$.
The first part of the proof shows that if $u\in W^{2m,p_i}_\mathrm{loc}(\Omega)$, then $u\in W^{2m,p_{i+1}}_\mathrm{loc}(\Omega)$.
After $k$ such steps we obtain $u\in W^{2m,p_{k}}_\mathrm{loc}(\Omega)$ and Lemma \ref{lem:2} yields the assertion.

We now proceed with the proof of the second assertion. Let $u\in W^{2m,p}_\mathrm{loc}(\Omega)$ with $p>1$ be the strong solution of $Lu=g(x,u)$ in $\Omega$ with $g$ of polynomial growth \eqref{poplynomial:1}.
If $p\geq \frac{N}{2m}$, then the Sobolev embedding implies that $u\in \bigcap_{1\leq 1< \infty}L^q_\mathrm{loc}(\Omega)$.
Hence  the superposition $g(\cdot,u) \in L^q_\mathrm{loc}(\Omega)$ for any $q\geq 1$ and Proposition \ref{prop} yields $u\in W^{2m,q}_\mathrm{loc}(\Omega)$.
Then $u\in C^{2m-1,h}_\mathrm{loc}(\Omega)$ due to the Sobolev embeddings for any $0<h<1$.

We now consider the case $p<\frac{N}{2m}$.
We claim that there exists $a\in L^\frac{N}{2m}_\mathrm{loc}(\Omega)$ such that 
\[
|g(x,u(x))|\leq a(x)(1+|u(x)|),\quad \text{a.e. in }\Omega.
\]
Indeed, defining
\begin{equation}\label{eq:20}
a(x) = \frac{|g(x,u(x))|}{1+|u(x)|}
\end{equation}
and fixing some $B\subset\subset \Omega$ we have by \eqref{poplynomial:1}
\[
\begin{aligned}
\int_B a^\frac{N}{2m}\d x  &  \lesssim \int_B  \le \frac{1 + |u|^\frac{N}{N-2mp}}{1 + |u|}\pr ^\frac{N}{2m} \d x  \\
&\leq  \int_{B \cap \left\{x\mid |u(x)|\leq 1 \right\} }2^\frac{N}{2m}\d x + \int_{B \cap\left\{ x\mid |u(x)| >1 \right\}} \le \frac{2|u|^\frac{N}{N-2mp}}{|u|}\pr ^\frac{N}{2m}\d x \\
&\lesssim  1 + \int_B{|u|^{\frac{2mp}{N-2mp}\frac{N}{2m}}}\d x < \infty,
\end{aligned}
\]
since $u\in W^{2m,p}(B)\subset L^\frac{Np}{N-2mp}(B)$.
We apply the already proved first part of Theorem \ref{thm} to infer that $u\in \bigcap_{1\leq q < \infty} L^q_\mathrm{loc}(\Omega)$.
This and the polynomial rate of growth \eqref{poplynomial:1} enable us to conclude that $L u  = g(\cdot,u)\in L^q_\mathrm{loc}(\Omega)$ for any $q\geq 1$.
Once again Proposition \ref{prop} and the Sobolev embeddings finish the proof.
\end{proof}

\section{(Very) Weak Solutions}\label{section:3}
This section is devoted to the Brezis--Kato theorem for certain types of weak solutions.
We make stronger assumptions on the coefficients of the differential operator $L$ to deal with less regular solutions.
We consider the differential operator in the divergence form as well. 

Let us recall that $L$ is a strongly elliptic linear differential operator satisfying the hypothesis stated in Introduction.
We additionally assume here that the coefficients $a_\alpha$ of $L$ are more regular, namely,
\begin{equation}\label{coeff_reg}
a_\alpha \in C^{|\alpha|}(\Omega),\quad\text{for}\; 0<|\alpha|\leq 2m.
\end{equation}
Let $f\in L^1_\text{loc}(\Omega)$.
We say that $u\in L^1_\text{loc}(\Omega)$ is a \emph{very weak solution} of 
\[
Lu  = f,
\]
if for every $\f\in C^\infty_0(\Omega)$
\[
\int_\Omega  u(x)\sum_{|\alpha|\leq 2m} (-1)^{m+|\alpha|}  D^\alpha\le a_\alpha(x)\f(x)\pr \, d x = \int_\Omega f(x)\f(x)\, dx.
\]
The above formula arises in a usual manner from formally multiplying the differential equation $Lu=f$ by a test function $\f$, integrating over $\Omega$ and using integration by parts.

\begin{lemma}\label{lem_weak}
Let $1<p,\,q<\infty$.
If $u\in L^q_\mathrm{loc}(\Omega)$ is a very weak solution of $Lu = f\in L^p_\mathrm{loc}(\Omega)$, then $u\in W^{2m,p}_\mathrm{loc}(\Omega)$.
\end{lemma}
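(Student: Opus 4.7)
The plan is a two-stage bootstrap. First I raise $u$ from $L^q_\mathrm{loc}(\Omega)$ to $L^p_\mathrm{loc}(\Omega)$ by a duality argument against the adjoint Dirichlet problem; then, once $u \in L^p_\mathrm{loc}(\Omega)$, I mollify and apply the interior a priori estimate together with a Friedrichs-type commutator bound to conclude $u \in W^{2m,p}_\mathrm{loc}(\Omega)$.

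For the first stage, fix balls $B_0 \subset\subset B \subset\subset \Omega$ and a cutoff $\chi \in C^\infty_0(B)$ with $\chi \equiv 1$ on $B_0$. By \eqref{coeff_reg}, the formal adjoint
\[
L^*\varphi = \sum_{|\alpha|\leq 2m}(-1)^{m+|\alpha|}D^\alpha(a_\alpha\varphi)
\]
is a strongly elliptic operator of the same form as $L$ satisfying (i)--(ii), so the theory behind Proposition \ref{observation} applies to $L^*$. Since $\psi \in C^\infty_0(B_0)$ belongs to every $L^s(B)$, the adjoint Dirichlet problem $(L^*+\zeta I)\varphi = \psi$ on $B$ has a unique solution $\varphi$ that lies in $W^{2m,s}(B)\cap W^{m,s}_0(B)$ for \emph{every} $s\in(1,\infty)$, with $\|\varphi\|_{W^{2m,s}(B)}\leq C_s\|\psi\|_{L^s(B)}$. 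The localization $\tilde\varphi := \chi\varphi$ is compactly supported in $B$, hence lies in $W^{2m,s}_0(B)$ for every $s$. Approximating $\tilde\varphi$ by $C^\infty_0(B)$ functions in the $W^{2m,q'}$-norm ensures $L^*\tilde\varphi_n \to L^*\tilde\varphi$ in $L^{q'}(B)$ and allows passage to the limit in the very weak formulation. Using $L^*\tilde\varphi = \chi\psi - \zeta\chi\varphi + [L^*,\chi]\varphi$ and $\chi\psi = \psi$, this yields
\[
\int u\,\psi\,dx = \zeta\int u\,\chi\varphi\,dx + \int f\,\chi\varphi\,dx - \int u\,[L^*,\chi]\varphi\,dx.
\]
To bound the right-hand side by a constant multiple of $\|\psi\|_{L^{r'}(B_0)}$ for a target $r>q$, use the elliptic estimate $\|\varphi\|_{W^{2m,r'}(B)}\leq C_{r'}\|\psi\|_{L^{r'}(B)}$ with Sobolev embeddings to place $\chi\varphi\in L^{q'}\cap L^{p'}$ and $[L^*,\chi]\varphi\in L^{q'}$; the binding constraint (from the commutator term, of order $2m-1$) is $r\leq Nq/(N-q)$ for $q<N$ (trivially arbitrary if $q\geq N$). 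Thus $|\int u\,\psi\,dx|\leq C(\|u\|_{L^q(B)}+\|f\|_{L^p(B)})\|\psi\|_{L^{r'}(B_0)}$, so duality yields $u\in L^r_\mathrm{loc}(\Omega)$, strictly improving integrability. A finite iteration of this Sobolev-style gain delivers $u\in L^p_\mathrm{loc}(\Omega)$.

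For the second stage, on $\Omega_\epsilon := \{x\in\Omega:\mathrm{dist}(x,\partial\Omega)>\epsilon\}$ set $u_\epsilon := u*\rho_\epsilon\in C^\infty(\Omega_\epsilon)$. Then $u_\epsilon$ satisfies $Lu_\epsilon = f*\rho_\epsilon + r_\epsilon$, with $r_\epsilon := Lu_\epsilon - (Lu)*\rho_\epsilon$ the Friedrichs-type commutator. The coefficient regularity \eqref{coeff_reg} allows a Taylor expansion of each $a_\alpha(x)-a_\alpha(y)$ and subsequent integration by parts inside the convolution, yielding $r_\epsilon\to 0$ in $L^p_\mathrm{loc}(\Omega)$ as $\epsilon\to 0^+$. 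Combined with $f*\rho_\epsilon\to f$ in $L^p_\mathrm{loc}$, this bounds $\{Lu_\epsilon\}$ in $L^p_\mathrm{loc}(\Omega)$. The interior version of the a priori estimate \eqref{eq:12} then bounds $\|u_\epsilon\|_{W^{2m,p}(B_0)}$ uniformly in $\epsilon$ for any $B_0\subset\subset\Omega$; weak compactness of bounded sequences in $W^{2m,p}(B_0)$ together with the $L^p$-convergence $u_\epsilon\to u$ identifies the weak limit as $u$, so $u\in W^{2m,p}_\mathrm{loc}(\Omega)$.

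The main obstacle is the Friedrichs-type commutator estimate $r_\epsilon\to 0$ in $L^p_\mathrm{loc}$ for the full operator $L$: this is precisely what the coefficient regularity \eqref{coeff_reg} is designed to enable, since the $|x-y|^{|\alpha|}$ factors from Taylor-expanding $a_\alpha(x)-a_\alpha(y)$ must compensate the $|\alpha|$ derivatives of $\rho_\epsilon$ arising after integration by parts.
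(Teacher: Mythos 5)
Your Stage 1 (the duality bootstrap that raises $u$ from $L^q_\mathrm{loc}$ to $L^p_\mathrm{loc}$ by testing against solutions of the adjoint Dirichlet problem on $B$) is sound in outline: it uses for $L^*$ exactly the solvability and a priori estimates from Browder that the paper already uses for $L$, and the exponent bookkeeping (the commutator $[L^*,\chi]$ of order $2m-1$ forcing $r\leq Nq/(N-q)$ per step) is correct. The genuine gap is Stage 2, namely the claim that the Friedrichs-type commutator $r_\varepsilon = L(u*\rho_\varepsilon) - (Lu)*\rho_\varepsilon$ tends to $0$ (or is even bounded) in $L^p_\mathrm{loc}$ knowing only $u\in L^p_\mathrm{loc}$ and $a_\alpha\in C^{|\alpha|}$. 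This is the classical Friedrichs lemma for \emph{first-order} operators, but it fails for order $\geq 2$: after Taylor-expanding the coefficients, the commutator is (up to harmless terms) a differential operator of order up to $2m-1$ acting at scale $\varepsilon$, and it can only be controlled if one already knows $u\in W^{2m-1,p}_\mathrm{loc}$ --- which is what is being proved, so using the equation to save the step would be circular. A concrete obstruction already at order two: for $Au = x_1\partial_1^2u$ one computes $r_\varepsilon u = u*\partial_1^2\bigl(z_1\rho_\varepsilon(z)\bigr)$, a pure convolution whose Fourier symbol equals $\varepsilon^{-1}(\varepsilon\xi_1)^2\,\widehat{\rho}\,'$ evaluated at $\varepsilon\xi$ (up to constants), hence has size of order $\varepsilon^{-1}$ at frequencies $|\xi|\sim c/\varepsilon$; so $\|r_\varepsilon\|_{L^2\to L^2}\gtrsim \varepsilon^{-1}$, and by Banach--Steinhaus there exists $u\in L^2$ for which $r_\varepsilon u$ does not converge (indeed is unbounded) in $L^2$. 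Coefficient regularity \eqref{coeff_reg} alone therefore cannot deliver $r_\varepsilon\to 0$; Taylor expansion and integration by parts produce the divergent factors $\varepsilon^{-(|\alpha|-|\beta|)}$ rather than cancel them.

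For comparison, the paper does not attempt any of this: its proof of the lemma is a one-step reduction. By \eqref{coeff_reg} and the Leibniz formula the formal adjoint $A\varphi=\sum_{|\alpha|\leq 2m}(-1)^{m+|\alpha|}D^\alpha(\overline{a_\alpha}\varphi)$ is again an elliptic operator of the admissible class, and then Theorem 7.1 of \cite{Agmon} is invoked, which asserts precisely the interior $W^{2m,p}$ regularity of an $L^q_\mathrm{loc}$ very weak solution with right-hand side in $L^p_\mathrm{loc}$. In other words, the hard analytic content of your Stage 2 is Agmon's interior regularity theorem, whose proof goes through elliptic machinery (local representation/parametrix and Calder\'on--Zygmund type estimates), not a soft mollification argument. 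If you want a self-contained argument in the spirit of your Stage 1, you would have to upgrade the duality scheme to reach $W^{2m,p}$ (e.g.\ pairing $u$ with $L^*$-preimages adapted to $D^\alpha\psi$, or reproducing Agmon's representation), but the mollification-plus-commutator shortcut as written does not close, so the lemma is not yet proved.
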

\begin{proof}
Take any ball $B_0\subset\subset \Omega$ and choose another ball $B$ such that $B_0\subset\subset B\subset \subset \Omega$.
We introduce the adjoint $A$ of $L$ in $B$ by the formula
\[
A u :=\sum_{|\alpha|\leq 2m} (-1)^{m+ |\alpha|}D^\alpha(\overline{a_\alpha(x)}u).
\]
Due to the Leibniz formula and \eqref{coeff_reg} the operator $A$ is itself an elliptic differential operator, i.e., $A u = \sum_{|\alpha| \leq 2m } b_\alpha(x)D^\alpha u$ for some coefficients $b_\alpha$.
Note that $A$ and $u$ fulfill all the hypotheses of Thm 7.1 in~\cite{Agmon} and therefore $u\in W^{2m,p}_\mathrm{loc}(B)\subset W^{2m,p}(B_0)$.
This finishes the proof.
\end{proof}

\begin{theorem}\label{thm:2}
Let  $L$ by strongly elliptic operator with the coefficients satisfying \eqref{coeff_reg}.
Let $u\in L^p_\mathrm{loc}(\Omega)$ for some $p > \frac{N}{N-2m}$ be a very weak solution of
\begin{equation}\label{eq:24}
Lu = g(x,u)\quad \text{in }\Omega.
\end{equation}
If $g$ satisfies \eqref{eq:1}, then $u \in \bigcap_{1\leq q<\infty} L^q_\mathrm{loc}(\Omega)$.
If $g$ has a polynomial rate of growth, namely, 
\begin{equation}\label{polynomial:2}
|g(x,s)|\lesssim 1+|s|^\frac{N+2mp}{N},\quad \text{for all }x\in \Omega\text{ and } s\in  \C,
\end{equation}
then $u\in W^{2m,q}_\mathrm{loc}(\Omega)\cap C^{2m-1,h}_\mathrm{loc}(\Omega)$, for all $1\leq q< \infty$ and $0< h < 1$.
\end{theorem}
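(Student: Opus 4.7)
The plan is to use Lemma \ref{lem_weak} as the bridge between very weak and strong solutions, so that the entire theorem reduces to an application of Theorem \ref{thm} after one bootstrap step.

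First I would prove the assertion under \eqref{eq:1}. Since $u\in L^p_{\mathrm{loc}}(\Omega)$ with $p>\frac{N}{N-2m}$, Proposition \ref{prop2} applied with $s=p$ yields $g(\cdot,u)\in L^r_{\mathrm{loc}}(\Omega)$ where $r:=\frac{Np}{N+2mp}$, and a direct check shows $r>1$ exactly when $p>\frac{N}{N-2m}$. Then Lemma \ref{lem_weak} applies to $Lu=g(\cdot,u)\in L^r_{\mathrm{loc}}(\Omega)$ and upgrades $u$ to $W^{2m,r}_{\mathrm{loc}}(\Omega)$. Because $a_\alpha\in C^{|\alpha|}(\Omega)$, a routine integration by parts in the very weak identity (with test functions $a_\alpha\varphi\in C^{|\alpha|}_0$ against $u$ now having $2m$ Sobolev derivatives) converts the very weak equation into the a.e.\ equality $Lu=g(x,u)$, so $u$ is a strong solution with exponent $r>1$. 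The first part of Theorem \ref{thm} now concludes $u\in\bigcap_{1\leq q<\infty}L^q_{\mathrm{loc}}(\Omega)$.

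For the assertion under the polynomial growth \eqref{polynomial:2}, I would first verify that \eqref{eq:1} still holds for some $a\in L^{N/(2m)}_{\mathrm{loc}}(\Omega)$ by setting $a(x):=|g(x,u(x))|/(1+|u(x)|)$. On $\{|u|\leq 1\}$ this is bounded, while on $\{|u|>1\}$ one gets $a\lesssim|u|^{2mp/N}$ from \eqref{polynomial:2}, whence $a^{N/(2m)}\lesssim 1+|u|^p$ is locally integrable since $u\in L^p_{\mathrm{loc}}(\Omega)$. Invoking the first assertion already proved, $u\in\bigcap_{1\leq q<\infty}L^q_{\mathrm{loc}}(\Omega)$. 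Combined with \eqref{polynomial:2}, this gives $g(\cdot,u)\in L^q_{\mathrm{loc}}(\Omega)$ for every finite $q$, and a final application of Lemma \ref{lem_weak} upgrades $u$ to $W^{2m,q}_{\mathrm{loc}}(\Omega)$ for all such $q$. The Morrey--Sobolev embeddings then deliver $u\in C^{2m-1,h}_{\mathrm{loc}}(\Omega)$ for every $0<h<1$.

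The only genuinely delicate step is the initial bootstrap, since the later parts are entirely mechanical once a strong solution is in hand. The hypothesis $p>\frac{N}{N-2m}$ is sharp for this step: it is exactly what is needed to place $g(\cdot,u)$ in some $L^r_{\mathrm{loc}}$ with $r>1$ so that Lemma \ref{lem_weak} can launch the bootstrap. After that, Theorem \ref{thm} does all the heavy lifting.
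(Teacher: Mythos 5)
Your proposal is correct and follows essentially the same route as the paper: Proposition \ref{prop2} plus Lemma \ref{lem_weak} to upgrade the very weak solution to a strong $W^{2m,\frac{Np}{N+2mp}}_{\mathrm{loc}}$ solution (with $\frac{Np}{N+2mp}>1$ precisely because $p>\frac{N}{N-2m}$), then Theorem \ref{thm} for the first assertion, and for the polynomial-growth case the same construction $a=|g(\cdot,u)|/(1+|u|)\in L^{N/2m}_{\mathrm{loc}}$ followed by Lemma \ref{lem_weak} and the Sobolev embeddings. Your explicit justification that the gained regularity turns the very weak identity into the a.e.\ equation is a detail the paper only asserts, but it is the same argument.
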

\begin{proof}
Let $u\in L^p_\mathrm{loc}(\Omega)$ be a very weak solution of $Lu = g(x,u)$.
By Proposition \ref{prop2}, $Lu = g(\cdot,u)\in L^\frac{Np}{N+2mp}_\mathrm{loc}(\Omega)$ and $u\in W^{2m,\frac{Np}{N+2mp}}_\mathrm{loc}(\Omega)$ in view of Lemma \ref{lem_weak}.
Such a regularity implies that $u$ is a strong solution of \eqref{eq:24} and Theorem \ref{thm} yields $u\in \bigcap_{1\leq q < \infty} L^q_\mathrm{loc}(\Omega)$.

If $g$ satisfies \eqref{polynomial:2}, then we define 
\[
a(x)= \frac{|g(x,u(x))|}{1 + |u(x)|}
\]
and proceed similarly as in \eqref{eq:20} and below to show that $a\in L^\frac{N}{2m}_\mathrm{loc}(\Omega)$ and  $g$ satisfies \eqref{eq:1}. 
By the first part of the proof $u\in \bigcap_{1\leq q< \infty}L^q_\mathrm{loc}(\Omega)$ and the polynomial rate of growth \eqref{polynomial:2} then implies that $L u = g(\cdot,u) \in \bigcap_{1\leq q< \infty}L^q_\mathrm{loc}(\Omega)$.
We deduce utilizing Lemma \ref{lem_weak} that $u\in W^{2m,q}_\mathrm{loc}(\Omega)$ for every $q\geq 1$.
The last assertion follows by the Sobolev embeddings $W^{2m,q}_\mathrm{loc}(\Omega) \subset C^{2m-1,h}_\mathrm{loc}(\Omega)$.
\end{proof}

We now consider a differential operator in a \emph{divergence form}
\begin{equation}\label{divergence}
Lu = (-1)^m\sum_{|\alpha|,\, |\beta| \leq m}D^\beta \le a_{\alpha\beta}(x)D^\alpha u\pr,
\end{equation}
where $a_{\alpha\beta}\in C^{|\alpha| + |\beta|}(\Omega)$.
We say that $L$ is elliptic provided that
\[
\mathfrak{Re}\le\sum_{|\alpha| = |\beta | = 2m} a_{\alpha\beta}(x)\xi^{\alpha+\beta}\pr  \geq \lambda |\xi|^{2m}, \quad \text{for }\xi \in \R^N.
\]
Let $f\in L^1_\mathrm{loc}(\Omega)$.
We say that $u\in W^{m,1}_\mathrm{loc}(\Omega)$ is a \emph{weak solution} of 
\[
Lu = f\quad \text{in }\Omega,
\]
if for every test function $\f\in C^\infty_0(\Omega)$
\[
\int_\Omega\sum_{|\alpha|,\, |\beta| \leq m} (-1)^{m+|\beta|} a_{\alpha\beta}(x)D^\alpha u(x) D^\beta \f(x)\, dx = \int_\Omega f(x)\f(x)\, dx.
\]

\begin{theorem}\label{thm:3}
Let $L$ be the elliptic differential operator in the divergence form \eqref{divergence} with the coefficients $a_{\alpha\beta}\in C^{|\alpha|+|\beta|}(\Omega)$.
Let $u\in W^{m,p}_\mathrm{loc}(\Omega)$ with $p> \frac{N}{N-m}$ be a weak solution of 
\[
Lu=g(x,u)\quad\text{in }\Omega.
\]
If $g$ satisfies \eqref{eq:1}, then $u\in \bigcap_{1\leq q<\infty} L^q_\mathrm{loc}(\Omega)$.
If $g$ has the polynomial rate of growth: there is $r > 0$ such that  
\begin{equation}\label{polynomial:3}
|g(x,s)|\lesssim  1+|s|^r,\quad \text{for all } x\in \Omega\text{ and } s\in \C, \text{ where }r =
\begin{cases}
\frac{N+mp}{N-mp},&\text{if }p < \frac{N}{m},\\
\text{is arbitrary},&\text{if }p \geq \frac{N}{m},
\end{cases}
\end{equation}
then $u\in W^{2m,q}_\mathrm{loc}(\Omega)\cap C^{2m-1,h}_\mathrm{loc}(\Omega)$ for all $1\leq q< \infty$ and $0< h < 1$.
\end{theorem}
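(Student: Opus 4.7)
The plan is to reduce Theorem \ref{thm:3} to Theorem \ref{thm:2} by rewriting the divergence-form operator as a standard (non-divergence) operator of the same order and checking that a weak solution of the former is a very weak solution of the latter. The Sobolev embedding then supplies the integrability that Theorem \ref{thm:2} requires.

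First I would apply Leibniz's formula inside $(-1)^m\sum_{|\alpha|,|\beta|\leq m}D^\beta(a_{\alpha\beta}D^\alpha u)$ to rewrite, formally, $Lu=(-1)^m\sum_{|\mu|\leq 2m}c_\mu(x)D^\mu u=:L_0 u$, where $c_\mu(x)$ is a finite linear combination of terms $\binom{\beta}{\delta}D^{\beta-\delta}a_{\alpha\beta}(x)$ with $\alpha+\delta=\mu$ and $\delta\leq\beta$. Each such term carries $|\beta-\delta|$ derivatives of $a_{\alpha\beta}$, so the assumption $a_{\alpha\beta}\in C^{|\alpha|+|\beta|}$ gives $c_\mu\in C^{|\alpha|+|\beta|-|\beta-\delta|}=C^{|\alpha|+|\delta|}=C^{|\mu|}$, which matches the regularity hypothesis \eqref{coeff_reg} of Theorem \ref{thm:2}. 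For $|\mu|=2m$ the constraints $|\alpha|\leq m$ and $|\delta|\leq|\beta|\leq m$ force $|\alpha|=|\delta|=m$ and $\delta=\beta$, so the principal symbol of $L_0$ collapses to $\sum_{|\alpha|=|\beta|=m}a_{\alpha\beta}(x)\xi^{\alpha+\beta}$, and strong ellipticity of $L_0$ follows immediately from the divergence-form ellipticity assumption.

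Next I would show that $u$ is a very weak solution of $L_0 u=g(x,u)$. Starting from the weak formulation and integrating by parts $|\alpha|$ times to shift $D^\alpha$ off $u$ yields
\[
\int_\Omega u(x)\sum_{|\alpha|,|\beta|\leq m}(-1)^{m+|\alpha|+|\beta|}D^\alpha\bigl(a_{\alpha\beta}(x)D^\beta\varphi(x)\bigr)\,dx=\int_\Omega g(x,u)\varphi\,dx,
\]
which coincides, after regrouping by Leibniz's formula, with $\int u\cdot\sum_{|\mu|\leq 2m}(-1)^{m+|\mu|}D^\mu(c_\mu\varphi)\,dx=\int g(x,u)\varphi\,dx$ because both sides represent the formal adjoint of the same classical operator applied to $\varphi$. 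This is legitimate since $u\in W^{m,p}_\mathrm{loc}$ has all distributional derivatives of order $\leq m$ in $L^p_\mathrm{loc}$, while $a_{\alpha\beta}D^\beta\varphi$ is compactly supported and of class $C^{|\alpha|+|\beta|}$.

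Finally, Sobolev's embedding gives $u\in L^{p^\ast}_\mathrm{loc}(\Omega)$ with $p^\ast=Np/(N-mp)$ when $p<N/m$, and a short arithmetic check shows that $p^\ast>N/(N-2m)$ is equivalent to $p>N/(N-m)$, and that $(N+2mp^\ast)/N=(N+mp)/(N-mp)=r$; hence the hypothesis \eqref{polynomial:3} is exactly \eqref{polynomial:2} applied with exponent $p^\ast$. Theorem \ref{thm:2} applied to $L_0$ and $u$ then delivers both conclusions. The case $p\geq N/m$ is immediate because $u\in\bigcap_q L^q_\mathrm{loc}$ by Sobolev, hence $g(\cdot,u)\in\bigcap_q L^q_\mathrm{loc}$ by \eqref{polynomial:3}, and Lemma \ref{lem_weak} yields $u\in W^{2m,q}_\mathrm{loc}$ for every $q$. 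The main obstacle I expect is the careful justification of the integration by parts for a $2m$-th order divergence operator: one must verify at each step that $a_{\alpha\beta}D^\beta\varphi$ is an admissible test function against $D^\alpha u$, which a mollification of $a_{\alpha\beta}$ and passage to the limit using its $C^{|\alpha|+|\beta|}$-regularity should resolve cleanly.
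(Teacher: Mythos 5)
Your proposal is correct and follows essentially the same route as the paper: rewrite $L$ via Leibniz as a non-divergence operator with coefficients $c_\mu\in C^{|\mu|}(\Omega)$ and the same principal symbol, observe that the weak solution is a very weak solution of that operator, and then invoke the Sobolev embedding $W^{m,p}_\mathrm{loc}\subset L^{Np/(N-mp)}_\mathrm{loc}$ together with Theorem \ref{thm:2} (and Lemma \ref{lem_weak} when $p\geq N/m$). The only cosmetic difference is in the polynomial case for $p<N/m$: you match \eqref{polynomial:3} directly to \eqref{polynomial:2} through the identity $(N+2mp^\ast)/N=(N+mp)/(N-mp)$, whereas the paper re-verifies \eqref{eq:1} with $a=|g(x,u)|/(1+|u|)$ and then applies Lemma \ref{lem_weak}; both are valid and your exponent arithmetic checks out.
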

\begin{proof}
If $p \geq \frac{N}{m}$, then $u\in \bigcap_{1\leq q<\infty} L^q_\mathrm{loc}(\Omega)$ due to the Sobolev embedding.
Hence we assume that $p<\frac{N}{m}$.
Fix $\f \in C^\infty_0(\Omega)$ in the definition of a weak solution.
Using integration by parts and ordering the terms one may show that $u$ is also a very weak solution to some differential operator in the non-divergence form $\tilde L u  = (-1)^m\sum_{|\alpha|\leq 2m}c_\alpha(x)D^\alpha u = g(x,u)$ in $\Omega$.
The smoothness assumption on the coefficients $a_{\alpha\beta}\in C^{|\alpha|+|\beta|}(\Omega)$ implies that $c_\alpha\in C^{|\alpha|}(\Omega)$.
Moreover, we have $\sum_{|\alpha|=2m}c_\alpha(x)\xi^\alpha = \sum_{|\alpha|=|\beta|=m}a_{\alpha\beta}(x)\xi^{\alpha+\beta}$, so $\tilde L$ is elliptic as well.
The Sobolev embedding $W^{m,p}_\mathrm{loc}(\Omega)\subset L^\frac{Np}{N-mp}_\mathrm{loc}(\Omega)$ where $\frac{Np}{N-mp} > \frac{N}{N-2m}$ and Theorem \ref{thm:2} proves the assertion.

Let us now assume that \eqref{polynomial:3} holds.
If $p\geq \frac{N}{m}$, then the Sobolev embedding yields $u\in \bigcap_{1\leq q < \infty}L^q_\mathrm{loc}(\Omega)$.
If $p < \frac{N}{m}$, then in view of \eqref{polynomial:3} $g$ satisfies \eqref{eq:1} with $a(x) := \frac{|g(x,u)|}{1 + |u|}$ what may be shown as above.
Therefore, in any case we obtain $u\in \bigcap_{1\leq q<\infty} L^q_\mathrm{loc}(\Omega)$.
The polynomial rate of growth implies that $g(\cdot,u)\in \bigcap_{1\leq q<\infty} L^q_\mathrm{loc}(\Omega)$, so $\tilde Lu \in \bigcap_{1\leq q<\infty} L^q_\mathrm{loc}(\Omega)$.
Lemma \ref{lem_weak} and the Sobolev embeddings finishes the proof.
\end{proof}

We present a particular form of Theorem \ref{thm:3} for polyharmonic operators in a (most common) $L^2$ setting for future references.
\begin{corollary}
Let $L$ be the polyharmonic operator $(-\Delta)^m$ defined on an open set $\Omega\subset \R^N$ with $N > 2m$.
Let $u\in W^{m,2}_\mathrm{loc}(\Omega)$ be a weak solution of
\[
(-\Delta)^mu = g(x,u) \quad\text{in }\Omega,
\]
in the sense that for every $\f\in C^\infty_0(\Omega)$
\[
\begin{aligned}
\int_\Omega \Delta^k u \Delta^k \f \, dx = \int_\Omega g(x,u) \f\, dx, \quad&\text{if}\ m =2k,\\
\int_\Omega \nabla\Delta^k u \nabla\Delta^k \f \, dx = \int_\Omega g(x,u) \f\, dx, \quad&\text{if}\ m =2k+1.
\end{aligned}
\]
If $g$ satisfies \eqref{eq:1}, then $u\in \bigcap_{1\leq q<\infty}L^q_\mathrm{loc}(\Omega)$.
If $g$ has a polynomial rate of growth:
\[
|g(x,s)|\lesssim 1 + |s|^\frac{N+2m}{N-2m},\quad \text{for all } x\in \Omega\text{ and } s\in \C,
\]
then $u\in W^{2m,q}_\mathrm{loc}(\Omega)\cap C^{2m-1,h}_\mathrm{loc}(\Omega)$ for all $1\leq q< \infty$ and $0<h  < 1$.
\end{corollary}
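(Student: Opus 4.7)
The plan is to deduce this corollary directly from Theorem~\ref{thm:3}, applied to the polyharmonic operator $L = (-\Delta)^m$ with $p=2$. First, I would exhibit $(-\Delta)^m$ as a divergence-form elliptic operator of the shape \eqref{divergence}. Using the multinomial identity, one may write
\[
(-\Delta)^m u = (-1)^m \sum_{|\alpha|=m}\binom{m}{\alpha} D^\alpha D^\alpha u,
\]
which corresponds to constant coefficients $a_{\alpha\beta} = \binom{m}{\alpha}\delta_{\alpha\beta}$ for $|\alpha|=|\beta|=m$ and $a_{\alpha\beta}=0$ otherwise. These coefficients are trivially of class $C^{|\alpha|+|\beta|}(\Omega)$, and ellipticity follows from the identity $\sum_{|\alpha|=m}\binom{m}{\alpha}\xi^{2\alpha} = |\xi|^{2m}$ with ellipticity constant $\lambda = 1$.

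Next I would verify that the threshold hypothesis $p > N/(N-m)$ of Theorem~\ref{thm:3} is met: for $p=2$ this inequality is equivalent to $2(N-m) > N$, i.e.\ $N > 2m$, which is a standing hypothesis of the Corollary. Likewise, the polynomial growth exponent in \eqref{polynomial:3} with $p=2$ becomes $r = (N+mp)/(N-mp) = (N+2m)/(N-2m)$, precisely matching the rate assumed in the Corollary.

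The substantive step is to check that the weak formulation displayed in the Corollary coincides with the divergence-form weak formulation used in Theorem~\ref{thm:3} under the above choice of $a_{\alpha\beta}$. For smooth functions $u,\f$ both integrals reduce, after $m$ integrations by parts, to $\int_\Omega (-\Delta)^m u\cdot\f\, dx$; the two parities of $m$ correspond exactly to the two displayed identities in the Corollary, in that an even number of integrations by parts yields the $\Delta^k u\cdot\Delta^k\f$ pairing, while an odd number leaves one leftover gradient pairing $\nabla\Delta^k u\cdot\nabla\Delta^k\f$. Since both sides are continuous in $u$ with respect to the $W^{m,2}_{\mathrm{loc}}(\Omega)$ topology, a density argument extends the identity from smooth $u$ to any $u\in W^{m,2}_{\mathrm{loc}}(\Omega)$.

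With these three ingredients in hand --- operator structure, verification of the exponent condition, and equivalence of the weak formulations --- Theorem~\ref{thm:3} directly delivers both conclusions: $u\in\bigcap_{1\leq q<\infty}L^q_{\mathrm{loc}}(\Omega)$ under \eqref{eq:1}, and $u\in W^{2m,q}_{\mathrm{loc}}(\Omega)\cap C^{2m-1,h}_{\mathrm{loc}}(\Omega)$ under the polynomial growth hypothesis. The main (and in fact only) obstacle is the bookkeeping required to track signs and the distribution of derivatives through the $m$-fold integration by parts that identifies the Corollary's $L^2$-type weak formulation with the divergence-form weak formulation of Theorem~\ref{thm:3}; there is no new analytic content beyond what is already contained in the latter result.
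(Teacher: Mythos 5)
Your proposal is correct and follows exactly the route the paper intends: the Corollary is stated there as a direct specialization of Theorem~\ref{thm:3} to $L=(-\Delta)^m$ with $p=2$, and your verifications (divergence-form representation with $a_{\alpha\beta}=\binom{m}{\alpha}\delta_{\alpha\beta}$, ellipticity via $\sum_{|\alpha|=m}\binom{m}{\alpha}\xi^{2\alpha}=|\xi|^{2m}$, the threshold $2>N/(N-m)\Leftrightarrow N>2m$, the exponent $(N+2m)/(N-2m)$ in \eqref{polynomial:3}, and the equivalence of the two weak formulations by integration by parts plus density) are precisely the bookkeeping needed. No gaps.
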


\section*{Acknowledgements}
The research was partially supported by Narodowe Centrum Nauki Grant No. 2017/26/E/ST1/00817.

\begin{bibdiv}

	\begin{biblist}

\bib{Agmon}{article}{
   author={Agmon, Shmuel},
   title={The $L_{p}$ approach to the Dirichlet problem. I. Regularity
   theorems},
   journal={Ann. Scuola Norm. Sup. Pisa Cl. Sci. (3)},
   volume={13},
   date={1959},
   pages={405--448},
   issn={0391-173X},
}


\bib{Berestycki_Lions}{article}{
   author={Berestycki, H.},
   author={Lions, P.-L.},
   title={Nonlinear scalar field equations. I. Existence of a ground state},
   journal={Arch. Rational Mech. Anal.},
   volume={82},
   date={1983},
   number={4},
   pages={313--345},
   issn={0003-9527},
}

\bib{Brezis_Kato}{article}{
   author={Br\'{e}zis, Ha\"{\i}m},
   author={Kato, Tosio},
   title={Remarks on the Schr\"{o}dinger operator with singular complex
   potentials},
   journal={J. Math. Pures Appl. (9)},
   volume={58},
   date={1979},
   number={2},
   pages={137--151},
   issn={0021-7824},
}

\bib{Browder}{article}{
   author={Browder, Felix E.},
   title={On the spectral theory of strongly elliptic differential
   operators},
   journal={Proc. Nat. Acad. Sci. U.S.A.},
   volume={45},
   date={1959},
   pages={1423--1431},
   issn={0027-8424},
}

\bib{escauriaza}{article}{
   author={Escauriaza, Luis},
   author={Montaner, Santiago},
   title={Some remarks on the $L^p$ regularity of second derivatives of
   solutions to non-divergence elliptic equations and the Dini condition},
   journal={Atti Accad. Naz. Lincei Rend. Lincei Mat. Appl.},
   volume={28},
   date={2017},
   number={1},
   pages={49--63},
   issn={1120-6330},
}

\bib{Kato}{book}{
   author={Kato, Tosio},
   title={Perturbation theory for linear operators},
   series={Classics in Mathematics},
   note={Reprint of the 1980 edition},
   publisher={Springer-Verlag, Berlin},
   date={1995},
   pages={xxii+619},
}

\bib{Leite}{article}{
   author={Leite, Edir Junior Ferreira},
   title={Fractional elliptic systems with nonlinearities of arbitrary
   growth},
   journal={Electron. J. Differential Equations},
   date={2017},
   pages={Paper No. 206, 20},
}

\bib{MS}{article}{
   author={Mederski, Jarosław},
   author={Siemianowski, Jakub},
   title={Biharmonic nonlinear scalar field equations},
   journal={arXiv:2107.07320}
}

\bib{Moroz}{article}{
   author={Moroz, Vitaly},
   author={Van Schaftingen, Jean},
   title={Existence of groundstates for a class of nonlinear Choquard
   equations},
   journal={Trans. Amer. Math. Soc.},
   volume={367},
   date={2015},
   number={9},
   pages={6557--6579},
   issn={0002-9947},
}

\bib{Struwe}{book}{
   author={Struwe, Michael},
   title={Variational methods},
   series={Ergebnisse der Mathematik und ihrer Grenzgebiete. 3. Folge. A
   Series of Modern Surveys in Mathematics [Results in Mathematics and
   Related Areas. 3rd Series. A Series of Modern Surveys in Mathematics]},
   volume={34},
   edition={4},
   note={Applications to nonlinear partial differential equations and
   Hamiltonian systems},
   publisher={Springer-Verlag, Berlin},
   date={2008},
   pages={xx+302},
}

\bib{Van_der_Vorst}{article}{
   author={Van der Vorst, R. C. A. M.},
   title={Best constant for the embedding of the space $H^2\cap
   H^1_0(\Omega)$ into $L^{2N/(N-4)}(\Omega)$},
   journal={Differential Integral Equations},
   volume={6},
   date={1993},
   number={2},
   pages={259--276},
   issn={0893-4983},
}
		
\bib{Willem}{book}{
   author={Willem, Michel},
   title={Minimax theorems},
   series={Progress in Nonlinear Differential Equations and their
   Applications},
   volume={24},
   publisher={Birkh\"{a}user Boston, Inc., Boston, MA},
   date={1996},
   pages={x+162},
   isbn={0-8176-3913-6},
}  		
		
	\end{biblist}
\end{bibdiv}

\end{document}